\documentclass[11pt]{amsart}

\usepackage[T1]{fontenc}
\usepackage{amsmath,amssymb,amsthm}
\usepackage{hyperref}
\hypersetup{colorlinks=true,linkcolor=blue,citecolor=blue,urlcolor=blue}
\usepackage{orcidlink}

\theoremstyle{plain}
\newtheorem{theorem}{Theorem}
\theoremstyle{definition}

\newcommand{\Cs}{C_{\sigma}}

\begin{document}

\title[A counterexample to a conjecture of Cohen]{A counterexample to a
subadditivity conjecture of Cohen for Sophie Germain cyclic numbers}

\author{Josu\'{e} Alexander Ibarra\,\orcidlink{0009-0007-1274-6685}}
\email{alex@elninja.com}
\date{\today}

\begin{abstract}
An integer $n\ge 1$ is \emph{cyclic} if $\gcd(n,\varphi(n))=1$---equivalently, if
every group of order $n$ is cyclic---and \emph{Sophie Germain cyclic} if both $n$
and $2n+1$ are cyclic. Let $\Cs(N)$ count the Sophie Germain cyclic integers in
$[1,N]$. Cohen~\cite{Cohen2025} conjectured that $\Cs$ is subadditive,
$\Cs(m+n)\le\Cs(m)+\Cs(n)$ for all $1\le m\le n$ (his Conjecture~66), having
checked $m,n\le 10^{6}$ without finding a counterexample. We give one: at $m=31$,
$n=3928$,
\[
  \Cs(3959)=697 \;>\; 696 = \Cs(31)+\Cs(3928).
\]
The argument is short, and is verified by the Lean~4 kernel.
\end{abstract}

\maketitle

\section{The conjecture}

For an integer $n\ge 1$, let $\varphi(n)$ be Euler's totient. By a classical
theorem, every group of order $n$ is cyclic if and only if $\gcd(n,\varphi(n))=1$;
such $n$ are the \emph{cyclic numbers}
(OEIS~\href{https://oeis.org/A003277}{A003277}), beginning
$1,2,3,5,7,11,13,15,17,19,23,\dots$. By analogy with Sophie Germain primes,
Cohen~\cite{Cohen2025} calls $n$ \emph{Sophie Germain cyclic} when both $n$ and
$2n+1$ are cyclic, and writes
\[
  \Cs(N) = \#\{\, k : 1\le k\le N,\ k \text{ is Sophie Germain cyclic}\,\}
\]
for their counting function. The Sophie Germain cyclic numbers
(OEIS~\href{https://oeis.org/A397387}{A397387}) begin
$1,2,3,5,7,11,15,17,23,29,\dots$, so $\Cs(31)=10$.

Cohen studies cyclic-number analogs of classical statements about primes---among
them analogs of the prime number theorem, of bounds on prime gaps, and of the
Hardy and Littlewood conjectures---testing each against the cyclic numbers below
$10^{8}$. One such statement is the subadditivity of the prime counting function,
$\pi(m+n)\le\pi(m)+\pi(n)$, the second conjecture of Hardy and
Littlewood~\cite{HardyLittlewood1923}; it is now believed false, being
incompatible with the prime $k$-tuples conjecture~\cite{HensleyRichards1974}.
Cohen states the cyclic analog (his Conjecture~65) and disproves it, then states
the Sophie Germain version:

\begin{quote}
\textbf{Conjecture 66.} For all integers $1\le m\le n$,\quad
$\Cs(m+n)\le\Cs(m)+\Cs(n)$.
\end{quote}

He reports finding no counterexample for $m,n\le 10^{6}$ and leaves it open. It is
false.
\begin{theorem}\label{thm:main}
At $m=31$ and $n=3928$,
\[
  \Cs(3959)=697 \;>\; 696 = \Cs(31)+\Cs(3928),
\]
so Conjecture~66 fails. The witness $(31,3928)$ lies within the range
$m,n\le 10^{6}$.
\end{theorem}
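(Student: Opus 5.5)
The claim is a finite computation, so the plan is to organize it so that it can be checked cleanly, by hand or by the Lean kernel. Write $\Cs(3959)=\Cs(3928)+\#\{k: 3929\le k\le 3959,\ k \text{ Sophie Germain cyclic}\}$. The inequality $\Cs(3959)>\Cs(31)+\Cs(3928)$ is then equivalent to saying that the window $(3928,3959]$ of length $31$ contains more Sophie Germain cyclic integers than $[1,31]$ does, namely at least $11$ against $\Cs(31)=10$. This reduction cancels the large term $\Cs(3928)$. The inequality then rests only on two short lists, and the absolute values $696,697$ are needed only for the exact statement.

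First I verify $\Cs(31)=10$ directly. The cyclic numbers up to $31$ are $1,2,3,5,7,11,13,15,17,19,23,29,31$. For each one I test whether $2k+1$ is cyclic. For example, $13\mapsto 27$ fails since $\gcd(27,18)=3$, and $19\mapsto 39$ fails since $39=3\cdot13$ with $3\mid 12$. The survivors are $1,2,3,5,7,11,15,17,23,29$, matching the list in the paper.

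Next I handle each $k\in[3929,3959]$. I factor $k$ and $2k+1\in[7859,7919]$ and check $\gcd(\cdot,\varphi(\cdot))=1$ via the criterion for squarefree $n=p_1\cdots p_r$: no $p_i$ divides any $p_j-1$. Even $k>2$ are never cyclic, because $\varphi(k)$ is even. This leaves $16$ odd candidates, and each needs two small factorizations. Multiples of $9$ are excluded outright, and so is any $k$ divisible by $3$ and by a prime $\equiv1\pmod 3$. I expect exactly $11$ survivors.

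Finally, to state $\Cs(3928)=686$ (so that $686+10=696$ and $686+11=697$), I would compute $\Cs(3928)$ by a sieve. Compute $\varphi$ up to $7857$ and mark $k\le3928$ with both $\gcd(k,\varphi(k))=1$ and $\gcd(2k+1,\varphi(2k+1))=1$. In the Lean formalization this is a decidable proposition evaluated by \texttt{decide} or \texttt{native\_decide} on a verified $\varphi$. The main obstacle is this exact global count. It is mechanical but too long to do by hand, and kernel-checked evaluation of roughly $8000$ totients must be made efficient, for instance by certifying factorizations rather than computing $\varphi$ naively. The window argument of the previous paragraph, however, already establishes the failure of Conjecture~66 independently of the value of $\Cs(3928)$. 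Since $31,3928\le10^6$, the witness lies in Cohen's range.
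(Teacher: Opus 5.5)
Your proposal is correct and follows the paper's proof essentially exactly. You split $\Cs(3959)$ at $3928$, so the inequality reduces to the window $(3928,3959]$ containing eleven Sophie Germain cyclic integers ($3929, 3931, 3935, 3941, 3943, 3945, 3947, 3949, 3953, 3957, 3959$) against the ten in $[1,31]$, with $\Cs(3928)$ cancelling; your extra sieve for $\Cs(3928)=686$ is needed only for the literal values $696$ and $697$, which the paper likewise obtains by separate computation rather than inside the proof (its Lean proof avoids both \texttt{native\_decide} and the global count).
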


\section{The counterexample}

Subadditivity $\Cs(m+n)\le\Cs(m)+\Cs(n)$ asks that no window of length $m$ contain
more Sophie Germain cyclic integers than the initial segment $[1,m]$. Here it
fails for a simple reason: the length-$31$ window $(3928,3959]$ contains
\emph{eleven} Sophie Germain cyclic integers, one more than the ten in $[1,31]$.

\begin{proof}[Proof of Theorem~\ref{thm:main}]
The Sophie Germain cyclic integers in $[1,31]$ are
\[
  1,\,2,\,3,\,5,\,7,\,11,\,15,\,17,\,23,\,29,
\]
ten in all, so $\Cs(31)=10$. The window $(3928,3959]$ contains eleven,
\[
  3929,\,3931,\,3935,\,3941,\,3943,\,3945,\,3947,\,3949,\,3953,\,3957,\,3959,
\]
so $\Cs(3959)=\Cs(3928)+11$. Therefore
\[
  \Cs(3959)=\Cs(3928)+11 \;>\; \Cs(3928)+10 = \Cs(31)+\Cs(3928). \qedhere
\]
\end{proof}

The value of $\Cs(3928)$ is never needed---only that the window to its right is
denser than $[1,31]$. As an illustration, $3929$ belongs to the window because it
is prime, giving $\varphi(3929)=3928$ and $\gcd(3929,3928)=1$, while
$2\cdot 3929+1=7859=29\cdot 271$ gives $\varphi(7859)=28\cdot 270=7560$ and
$\gcd(7859,7560)=1$; both $3929$ and $7859$ are cyclic, so $3929$ is Sophie Germain
cyclic. The other members are checked the same way. Counterexamples of this kind
are not rare; what makes this one notable is that it lies inside the range
$m,n\le 10^{6}$ Cohen reports searching.

\section{Verification}

The counterexample is elementary: each count is a finite enumeration the reader
can reproduce. We computed $\Cs(31)$, $\Cs(3928)$, and $\Cs(3959)$ with two
independent implementations of $\varphi$, and checked the definition against
Cohen's tabulated value $\Cs(598)=120$.

It is also verified formally. In the Lean~4 proof assistant over \texttt{mathlib},
a number is cyclic exactly when \texttt{Nat.Coprime n (Nat.totient n)}, Sophie
Germain cyclic when $2n+1$ is also, and $\Cs(N)$ is \texttt{Nat.count} of the
Sophie Germain cyclic integers below $N+1$. The negation of Conjecture~66 is
proved, and \texttt{\#print axioms} reports that it depends only on the three
standard axioms \texttt{propext}, \texttt{Classical.choice}, and
\texttt{Quot.sound}---in particular no \texttt{native\_decide}, so the result is
checked by the kernel itself, not by a compiled program. As in the proof above,
the formal argument splits $\Cs(3959)$ at $3928$ and evaluates only the eleven
window members, never the cumulative count $\Cs(3928)$. The source accompanies
this note as supplementary material and is self-contained over \texttt{mathlib}.

\section*{Remark}

Conjecture~66 is the Sophie Germain analog of Cohen's subadditivity conjecture for
ordinary cyclic numbers (his Conjecture~65), which he disproves; both are now
known to be false. This is what one would expect if the underlying conjecture
about primes---Hardy and Littlewood's $\pi(m+n)\le\pi(m)+\pi(n)$---is itself
false, as is generally believed~\cite{HensleyRichards1974}.

\section*{Acknowledgment}

The author thanks Joel E. Cohen for verifying the counterexample and confirming
that the search reported in~\cite{Cohen2025}, which found no counterexample to
Conjecture~66 for $m,n\le 10^{6}$, resulted from an error in his computer code
(personal communication, 2026).

\end{document}